\newtheorem{theorem}{Theorem}
\theoremstyle{plain}
\newtheorem{proposition}{Proposition}
\numberwithin{equation}{section}
\begin{document}
\title{On derived categories and derived functors}
\author{Samson Saneblidze}
\address{A. Razmadze Mathematical Institute\\
Department of Geometry and Topology\\
M. Aleksidze st., 1\\
0193 Tbilisi, Georgia} \email{sane@rmi.acnet.ge}

\thanks{}

\subjclass{Primary: 18E30, 18G10, 18G55; Secondary: 55U99}

\keywords{derived category, derived functor, multicomplex,  homological resolution}

\date{}

\begin{abstract}
For an abelian category,  a category equivalent to its derived category is constructed
by means of specific projective (injective) multicomplexes, the so-called  homological
resolutions.

\end{abstract}

\maketitle

\section{Introduction}

The derived category $D(\mathcal{A})$ of an abelian  category $\mathcal{A}$ was
introduced by Verdier in 1963, see \cite{Verdier} and  \cite{Verdier2}.  It was defined
as the localization of the category of unbounded chain complexes with respect to
quasi-isomorphisms. The existence of $D(\mathcal{A})$ creates set-theoretical problems.
Verdier proved the existence of $D(\mathcal{A})$  only in the case when $\mathcal{A}$
has finite global dimension. Later existence of $D(\mathcal{A})$  was established by
Spaltenstein \cite{Spaltenstein} in the case when $\mathcal{A}$ is the category of
modules over a ring, or more generally category of modules over a sheaf of rings. The
first case was also considered by Hovey in \cite{Hovey}. Recently the existence of
$D(\mathcal{A})$  was proved in the case when $\mathcal{A}$  is a Grothendieck
category, see for example \cite{Alonso}.

In the present paper we prove the existence of $D(\mathcal{A})$ in the case when
$\mathcal{A}$  has enough projectives and countable coproducts. By duality the same is
true provided $\mathcal{A}$ has enough injectives and countable products.

The essential part of the paper was in fact written about 15 years ago when the author
was visiting the Heidelberg University. As it is partially reviewed above in the
meantime there appeared various kinds of descriptions of derived categories, however,
the decision to write this paper is motivated by reasons  mentioned above and continued
below: Our approach uses a theory of \emph{multicomplexes} and  emphasis a r\^{o}le of
the homology of differential graded objects (bounded or unbounded) on the additive
level; nowadays multicomplexes are considered to be endowed with multiplicative or
higher order operations that measure certain standard relations up to homotopy (see,
for example, \cite{hal-sta}, \cite{hueb}, \cite{sane}, \cite{sane2}). It should be
noted that such enriched homological multicomplexes are candidates to be (co)fibrant
objects in the appropriated closed model category, since the analogs of Proposition
\ref{Whitehead} below (compare Proposition 3  in \cite{sane2}). So that it is expected
to use them for homotopy classification problems behind the rational homotopy theory
too.

I  thank T. Pirashvili for useful discussions. I also thank  J. Huebschmann for
comments.

\vspace{0.2in}
\section{The main result}

A chain map between unbounded chain complexes which induces an isomorphism in homology
(i.e. a quasi-isomorphism) is not a homotopy equivalence even   each complex consists
of projective objects of an abelian category $\mathcal{A},$ and also an additive
functor, such as $Hom$ and $\otimes$ one,  does  not preserve quasi-isomorphisms
\cite{Dold}. There are various kinds of restrictions on chain complexes that guarantees
quasi-isomorphisms to be homotopy equivalences (see \cite{Dold}, \cite{Spaltenstein},
\cite{Hovey}, \cite{Gelfand-Manin}).

Here we  consider the other kind of restriction  by introducing special projective
(injective) chain complexes, \emph{homological multicomplexes}. In particular, by means
of these complexes we can describe the derived category $D(\mathcal{A})$ and to
construct the derived functor for an additive functor mentioned in the introduction.

Usually the above restrictions are evoked to start inductively from the first
non-trivial component of a (bounded) chain complex. In our case, the induction relies
on  a specific filtration of the total complex of a multicomplex involving all  (total)
degrees simultaneously (compare \cite{Dold}, \cite{Hovey}). On the other hand, given a
chain complex $A$ over $\mathcal{A},$ one considers in the theory of derived category
projective (injective) replacements $C\rightarrow A$($A\rightarrow C$)  of $A$, i.e.
quasi-isomorphisms with $C$ consisting  of projective (injective) components from
$\mathcal{A}.$ We show that for each chain complex $A$ there is a multicomplex $C$ such
that its total complex is quasi-isomorphic to $A$ and $C^{\ast,j}$ is a projective
resolution of the cohomology $H^j(A)$ for each $j\in \mathbb{Z}.$ So that among
projective replacements of $A$ mentioned above, \emph{the homological resolution} $C$
could be chosen small as possible.

In order to state our main theorem below we choose the language of \emph{projective}
objects (the case of injective objects is entirely dual).

Given an abelian category $\mathcal{A}$ with countable coproducts,  a
\emph{multicomplex} over $\mathcal{A}$ is a bigraded object $C^{\ast,\ast}=\{C^{i,j}\}
_{i,j\in \mathbb{Z}}$ together with morphisms $ d^r: C^{i,j}\rightarrow
C^{i+r,j-r+1},\,r\geq 0,$ such that $\sum_{p+q=n}d^pd^q=0$ for each $n\geq 0.$ The
\emph{total} complex of $C^{\ast,\ast}$ is the chain complex $(Tot(C),d^{\ast})$ with
$$Tot(C)^n=\bigoplus_{i+j=n}C^{i,j}\ \ \text{and}\ \
d^{\ast}=d^0+d^1+\dotsb +d^r+\dotsb .$$ In particular,  $d^1d^1=0$ when  $d^0=0.$  A
multicomplex $(C^{\ast,\ast},d^{\ast})$ is called \emph{homological} if $d^0=0,$
$C^{i,\ast}=0$ for $ i>0$ and $H^{i}(C^{i,*},d^1)=0$ for  $ i<0 .$ A multicomplex
$(C^{\ast,\ast},d^{\ast})$ is called \emph{projective} if each $C^{i,j}$ is a
projective object of $\mathcal{A}.$ A \emph{column (resolution) filtration} of a
multicomplex $(C^{\ast,\ast},d^{\ast})$ is a sequence $\{C_{(k)}\}_{k\leq 0}$ with
$C_{(k)}=\bigoplus_{k\leq i\leq 0} C^{i,\ast}.$

A {\em multicomplex map} $f:A\rightarrow B$ between two multicomplexes $A$ and $B$ is a
chain map of total degree zero that preserves the column (resolution) filtration, i.e.
$fTot(A)^n\subset Tot(B)^n$ and $fA_{(k)}\subset B_{(k)};$ so that $f$ has the
components $f=f^0+\dotsb +f^i+\dotsb $ with  $ f^i:A^{s,t}\rightarrow B^{s+i,t-i}.$ A
{\em homotopy} between two maps $f,g:A\rightarrow B $ of multicomplexes is a chain
homotopy $s:A\rightarrow B$ of total degree $-1$ that lowers  the column filtration by
1, i.e.  $sTot(A)^n\subset Tot(B)^{n-1}$ and $sA_{(k)}\subset B_{(k-1)}.$

Note that, unlike standard bicomplexes, in a homological multicomplex we have no
vertical differentials; this fact together with the acyclicity with respect to the
horizontal differential $d^1$ guarantees the spectral sequence arising from the column
filtration to be collapsed; in particular, the other components $d^r,r\geq 2,$ have no
action to change the cohomology non-isomorphically; in other words, when $d^r$ varies
 in $Hom(C^{i,j}, C^{i+r,j-r+1})$ for $r\geq 2$ one
obtains multicomplexes with isomorphic
 cohomologies (see Fig. 1).

Let $K(\mathcal{A})$ be the category whose objects are chain complexes over
$\mathcal{A}$ and morphisms are homotopy classes of maps denoted by $[-,-],$
 $K_{\mathcal{M}}(\mathcal{A})$ be
the category whose objects are  multicomplexes over $\mathcal{A}$ and morphisms are
homotopy classes of multicomplex maps denoted by $[-,-]_{\mathcal{M}},$ while
$K_{\mathcal{P}}(\mathcal{A})$ be the (sub)category whose objects are homological
projective multicomplexes over $\mathcal{A}$ and morphisms are homotopy classes of maps
denoted by $[-,-]_{\mathcal{P}}$.

Recall that the \emph{derived category} $D(\mathcal{A})$ of $\mathcal{A}$ is defined as
 the category obtained from $K(\mathcal{A})$ by inverting the class of
quasi-isomorphisms \cite{Gelfand-Manin}, \cite{Iversen}, \cite{Verdier}, and let
$Q:K(\mathcal{A})\rightarrow D(\mathcal{A})$ be the localization functor. Let
$\kappa:K_{\mathcal{P}}(\mathcal{A})\rightarrow K(\mathcal{A})$ be the functor defined
by the following obvious proposition.

\begin{proposition}\label{inclusion} Given a multicomlex $C$ and a morphism  $\mathfrak{f} :C\rightarrow C^{\prime}$
in $K_{\mathcal{P}}(\mathcal{A}),$
  the assignments $C\rightarrow
Tot(C)$ and $\mathfrak{f} \rightarrow [Tot(f)]$   define a functor
$\kappa:K_{\mathcal{P}}(\mathcal{A})\rightarrow K(\mathcal{A})$ for a representative
$f$ of $\mathfrak{f}.$

\end{proposition}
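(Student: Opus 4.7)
The plan is to unwrap the definitions and verify each datum required of a functor, with every check reducing to definition-chasing — consistent with the author's calling the statement obvious.

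First, I would verify that for each homological projective multicomplex $C$, the totalization $Tot(C)$ is an object of $K(\mathcal{A})$. The direct sum $\bigoplus_{i+j=n} C^{i,j}$ exists in each total degree $n$ because $\mathcal{A}$ is assumed to have countable coproducts. That $d^{\ast}\circ d^{\ast}=0$ follows by expanding $d^{\ast}=\sum_{r\geq 0} d^r$ and grouping summands of equal total shift: the coefficient of the shift by $n$ is precisely $\sum_{p+q=n}d^p d^q$, which vanishes by the Maurer--Cartan-type relation in the definition of a multicomplex. Hence $(Tot(C),d^{\ast})$ is a genuine chain complex over $\mathcal{A}$.

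Second, for a representative $f:C\rightarrow C'$ of $\mathfrak{f}$, the morphism $Tot(f)=\sum_i f^i:Tot(C)\rightarrow Tot(C')$ is a chain map essentially by definition, since a multicomplex map is required to be a chain map of total degree zero on the total complexes. The one step that needs a sentence of justification is that the assignment descends to homotopy classes: if $f$ and $g$ are multicomplex-homotopic via $s$, then $s$ is by definition already a chain homotopy of total degree $-1$ between $Tot(f)$ and $Tot(g)$, so $[Tot(f)]=[Tot(g)]$ in $K(\mathcal{A})$. The extra column-filtration-lowering requirement on $s$ is stronger than what ordinary chain homotopy needs, and is simply discarded upon forgetting the filtration; this shows $[Tot(f)]$ depends only on $\mathfrak{f}$.

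Finally, functoriality — compatibility with composition and with identities — is automatic: $Tot(\mathrm{id}_C)=\mathrm{id}_{Tot(C)}$ bidegree-wise, and $Tot(f\circ g)=Tot(f)\circ Tot(g)$ since in each bidegree the composite is computed by summing the evident bilinear pairings of components. I do not anticipate any real obstacle; the only subtlety is bookkeeping in Step 1, namely that the bigrading is unbounded in both directions and that passing from the Maurer--Cartan identities in each fixed $n$ to $(d^{\ast})^2=0$ on $Tot(C)$ uses the countable coproduct hypothesis to ensure all relevant sums exist and the rearrangement of terms is legitimate.
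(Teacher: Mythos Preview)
Your proof is correct and follows essentially the same approach as the paper's own proof; you simply spell out in more detail the two points the paper compresses into one line each, namely that $C\mapsto Tot(C)$, $f\mapsto Tot(f)$ is a functor on the pre-quotient categories, and that a multicomplex homotopy $s$ already serves as a chain homotopy $Tot(s)$ on the total complexes.
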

\begin{proof}
 First remark that the
assignments  $C\rightarrow Tot(C)$ and $f \rightarrow Tot(f)$ define a functor from the
category of multicomplexes and multicomplex maps to the category of chain complexes and
chain maps over $\mathcal{A}.$ Now if $f,g:C\rightarrow C'$ are two chain homotopic
maps of multicomplexes  $f\underset{s}\simeq g,$ then clearly $s$ induces a map
$Tot(s):Tot(C)\rightarrow Tot(C')$ such that $Tot(f)\underset{Tot(s)}\simeq Tot(g).$
\end{proof}

  Consider the functor
\[\iota: K_{\mathcal{P}}(\mathcal{A}) \rightarrow D(\mathcal{A})\]
  obtained as
the composition $ K_{\mathcal{P}}(\mathcal{A})\overset{\kappa}\rightarrow
K(\mathcal{A})\overset{Q}\rightarrow D(\mathcal{A}). $

The main statement here is the following

\begin{theorem}\label{main} If an abelian category $\mathcal{A}$ has enough
projectives and countable coproducts, then the functor $\iota:
K_{\mathcal{P}}(\mathcal{A})\rightarrow D(\mathcal{A})$ is an equivalence of
categories.
\end{theorem}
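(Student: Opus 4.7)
The plan is to verify the two standard ingredients of an equivalence: essential surjectivity and full faithfulness of $\iota$. Essential surjectivity amounts to showing that every chain complex $A$ over $\mathcal{A}$ admits a homological projective multicomplex $C$ together with a quasi-isomorphism $Tot(C)\to A$; this $C$ is the ``homological resolution'' of $A$ advertised in the introduction. Full faithfulness amounts to showing that, for any two objects $C,C'\in K_{\mathcal{P}}(\mathcal{A})$, the canonical map $[C,C']_{\mathcal{P}}\to Hom_{D(\mathcal{A})}(Tot(C),Tot(C'))$ is a bijection.

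For essential surjectivity I would build $C$ by handling all cohomological degrees $j$ in parallel. Using that $\mathcal{A}$ has enough projectives, choose for each $j$ a projective resolution $\cdots\to C^{-2,j}\to C^{-1,j}\to C^{0,j}\twoheadrightarrow H^j(A)\to 0$; this defines all $C^{i,j}$ together with $d^1\colon C^{i,j}\to C^{i+1,j}$ and forces $d^0=0$, $C^{i,*}=0$ for $i>0$, and $H^i(C^{i,*},d^1)=0$ for $i<0$. Next, lift the identification $H^0(C^{*,j},d^1)\cong H^j(A)$ to morphisms $\varphi^0\colon C^{0,j}\to Z^j(A)\subset A^j$ and extend column by column to $\varphi^i\colon C^{i,j}\to A^{j-i}$; the failure of $\varphi$ to commute with $d^1$ is a boundary in $A$, and projectivity of the $C^{i,j}$ lets one correct it at the cost of introducing an operator $d^2\colon C^{i,j}\to C^{i+2,j-1}$. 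Iterating this procedure yields all the $d^r$ with $r\geq 2$, the multicomplex identities $\sum_{p+q=n}d^pd^q=0$, and the comparison map $\varphi\colon Tot(C)\to A$ at once. The countable coproducts hypothesis is exactly what is needed so that $Tot(C)^n=\bigoplus_{i+j=n}C^{i,j}$ exists in $\mathcal{A}$.

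For full faithfulness the key tool is Proposition \ref{Whitehead} (the Whitehead-type statement alluded to in the introduction): any multicomplex map $f\colon C\to C'$ between homological projective multicomplexes whose $Tot(f)$ is a quasi-isomorphism is a homotopy equivalence in $K_{\mathcal{P}}(\mathcal{A})$. Granted this, I would represent any morphism $\alpha\colon Tot(C)\to Tot(C')$ in $D(\mathcal{A})$ by a left roof $Tot(C)\xleftarrow{s} X\xrightarrow{g} Tot(C')$ with $s$ a quasi-isomorphism, resolve $X$ by essential surjectivity as $X\simeq Tot(C'')$ for some $C''\in K_{\mathcal{P}}(\mathcal{A})$, lift $s$ and $g$ to multicomplex maps (using projectivity and the inductive obstruction calculus of the previous paragraph), and then invert the lift of $s$ up to homotopy by Whitehead to obtain a well-defined class in $[C,C']_{\mathcal{P}}$. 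Independence of the roof and injectivity are handled analogously: if $Tot(f)$ and $Tot(g)$ agree in $D(\mathcal{A})$ then $Tot(f-g)$ factors through an acyclic object, and Whitehead upgrades this to a multicomplex homotopy $f\simeq g$.

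The main obstacle I expect is the inductive construction underlying both essential surjectivity and the lifting step in full faithfulness, in the \emph{unbounded} setting. In bounded situations one starts at the first nonzero column and proceeds stepwise; for unbounded $A$ there is no such starting point, so the induction must be reorganised along the order $r$ of the differentials rather than along columns, treating every $(i,j)$ simultaneously at each stage. This is precisely the ``specific filtration of the total complex involving all (total) degrees simultaneously'' emphasised in the introduction. The delicate points will be checking that the formal sum $d^{*}=\sum_{r\geq 0}d^r$ descends to an honest differential on $Tot(C)$, that $\varphi$ is an honest chain map, and that the obstruction classes encountered at each stage really do vanish in the appropriate $Ext$-groups so that projectivity can be invoked; here the hypothesis of countable coproducts, together with the local finiteness inherent in the total-complex construction, is what makes the whole bookkeeping well-defined.
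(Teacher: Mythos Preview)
Your essential-surjectivity argument is the paper's Proposition~\ref{resolution} almost verbatim: choose a projective resolution of each $H^j(A)$, then build the higher $d^r$ and the comparison map $\phi$ by induction on $r$, treating all bidegrees simultaneously.

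For full faithfulness you diverge from the paper. The paper does not touch roofs at all. Instead it constructs an explicit inverse functor: given $f:A\to A'$ in $K(\mathcal{A})$ and resolutions $\phi:C\to A$, $\phi':C'\to A'$, Proposition~\ref{Whitehead} applied to the quasi-isomorphism $\phi'$ yields a multicomplex map $g:C\to C'$ with $[\phi'g]=[f\phi]$. This assignment defines a functor $\varrho:K(\mathcal{A})\to K_{\mathcal{P}}(\mathcal{A})$ which (again by Proposition~\ref{Whitehead}) sends quasi-isomorphisms to isomorphisms, hence factors through $\bar\varrho:D(\mathcal{A})\to K_{\mathcal{P}}(\mathcal{A})$ by the universal property of $Q$; one then checks $\bar\varrho$ is inverse to $\iota$. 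Your roof-based route can be made to work, but the paper's construction of an inverse is shorter and sidesteps the bookkeeping of equivalence classes of roofs.

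One genuine gap in your sketch: you state Proposition~\ref{Whitehead} only in the weak form ``a multicomplex quasi-isomorphism between objects of $K_{\mathcal{P}}(\mathcal{A})$ is a homotopy equivalence.'' The paper's version is stronger: for \emph{any} quasi-isomorphism $f:A\to B$ with $A$ or $B$ a chain complex or a homological multicomplex, and any homological projective $C$, the induced map $f_{\#}:[C,A]_{\mathcal{M}}\to[C,B]_{\mathcal{M}}$ is a bijection. That stronger statement is exactly what is needed to lift a chain map $Tot(C'')\to Tot(C')$ to a multicomplex map $C''\to C'$ (apply it to the tautological quasi-isomorphism $C'\to Tot(C')$), and it is likewise what the paper uses to define $\varrho$ on morphisms. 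Your phrase ``lift $s$ and $g$ to multicomplex maps using projectivity and the inductive obstruction calculus'' hides precisely this step; without the full force of Proposition~\ref{Whitehead} you have no mechanism for producing a filtration-preserving representative of an arbitrary chain map between total complexes.
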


This theorem relies on the following 'Whitehead (or Adams-Hilton) type' proposition
that has an independent interest. Given a chain complex $A,$ we consider it as bigraded
via $A^{0,\ast}=A^{\ast}$ and $A^{i,\ast}=0$ for $i\neq 0,$ and then regard
$K(\mathcal{A})$ as the subcategory of $K_{\mathcal{M}}(\mathcal{A}).$

\begin{proposition}\label{Whitehead}
Let     $f:A\rightarrow B$  be a quasi-isomorphism in $K_{\mathcal{M}}(\mathcal{A})$
where $A $ or $ B$ is a chain complex or a homological multicomlex  over $\mathcal{A}.$
If $C$ is a homological projective multicomplex then  the induced map
$f_{_{\#}}:[C,A]_{\mathcal{M}}\rightarrow [C,B]_{\mathcal{M}}$ is a bijection.
\end{proposition}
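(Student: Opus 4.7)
The strategy is a Whitehead-type inductive construction along the column filtration of $C$, exploiting the projectivity of each column $C^{-i,*}$.

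The first step is to reduce bijectivity of $f_\#$ to the following key lemma: \emph{if $E$ is an acyclic chain complex or acyclic homological multicomplex, then every multicomplex map $\varphi : C \to E$ is null-homotopic}. Given the lemma, surjectivity and injectivity of $f_\#$ follow by standard arguments with the multicomplex mapping cone $\mathrm{Cone}(f)$; choosing the convention $\mathrm{Cone}(f)^{i,j} = A^{i+1,j} \oplus B^{i,j}$ one checks $d^{0}_{\mathrm{Cone}}=0$, and the row-acyclicity $H^{i}(\mathrm{Cone}^{*,q}, d^{1})=0$ for $i<0$ follows because the quasi-isomorphism $f$ induces isomorphisms of $E_2$-pages of the column-filtration spectral sequences of $A$ and $B$ (which collapse at $E_2$ under the homological multicomplex hypothesis). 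Alternatively, one can prove surjectivity and injectivity of $f_\#$ directly by two parallel inductive constructions of the same flavor as the one below.

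For the key lemma, I build the null-homotopy $s : C \to E$ column by column, via components $s^{0}, s^{1}, \ldots$. On $C^{-i,*}$ the defining equation is
\[
d_{E}\, s^{i} \;=\; \varphi^{i} - \sum_{r=1}^{i} s^{i-r}\, d^{r},
\]
so $s^{i}$ is constructed as a projective lift of the right-hand side through $d_{E}$. The base case $i=0$ is immediate: $\varphi^{0} : C^{0,*} \to E^{0,*}$ lands in $d_{E}$-cocycles automatically (since $C^{i,*}=0$ for $i>0$ there is no higher $d^{r}$ out of $C^{0,*}$), so projectivity of $C^{0,*}$ and acyclicity of $E$ deliver $s^{0}$. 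For the inductive step I verify that the right-hand side is a $d_{E}$-cocycle by combining the chain-map identity $d_{E}\varphi^{i} = \sum_{r\ge 1}\varphi^{i-r} d^{r}$, the induction hypothesis on the $s^{j}$ for $j<i$, and the multicomplex relation $\sum_{p+q=n} d^{p} d^{q} = 0$; the resulting telescoping double sum collapses precisely by this last identity. Acyclicity of $E$ then turns the cocycle into a coboundary, and projectivity of $C^{-i,*}$ supplies the needed $s^{i}$.

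The main technical hurdle is the combinatorial verification that the right-hand side is a cocycle at each stage: it is mechanical once the multicomplex axioms are invoked, but the index bookkeeping is where the work really lies. A subsidiary issue is confirming that $\mathrm{Cone}(f)$ indeed lies in the prescribed class, as sketched above; this is avoided entirely if one prefers the direct surjectivity/injectivity route.
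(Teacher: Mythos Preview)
Your key lemma and its inductive proof are sound when $E$ is a chain complex, and that computation is indeed the engine behind the paper's argument. The gap is in the cone reduction. With your bigrading $\mathrm{Cone}(f)^{i,j}=A^{i+1,j}\oplus B^{i,j}$ one computes $d^{0}_{\mathrm{Cone}}(a,b)=(-d^{0}_{A}a,\,d^{0}_{B}b)$, so $d^{0}_{\mathrm{Cone}}=0$ only when \emph{both} $A$ and $B$ already have $d^{0}=0$. In the case the paper explicitly treats and needs---$A$ or $B$ an ordinary chain complex, and in particular the mixed case $\phi':C'\to A'$ with $C'$ a homological multicomplex and $A'$ a chain complex, which is exactly how the proposition is invoked in the proof of Theorem~1---the cone is neither a chain complex (it occupies several columns) nor a homological multicomplex (it has $d^{0}\neq 0$), so your key lemma does not apply to it. No single shift convention rescues the mixed case. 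Moreover, even when $\mathrm{Cone}(f)$ \emph{is} a homological multicomplex, your inductive construction of $s$ as written produces only a chain homotopy of total complexes; verifying the column-filtration condition required of a multicomplex homotopy takes exactly the kind of column-by-column argument you are trying to bypass.

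The paper therefore takes what you call the ``alternative'' route and carries it out in full: it proves surjectivity and injectivity of $f_{\#}$ directly by two parallel inductions along the column filtration of $C$. For surjectivity one lifts $\bar g^{0}$ through $f$ using that $H(f)$ is an isomorphism and $C^{0,*}$ is projective, corrects $\bar g$ by a homotopy, and iterates; for injectivity one builds the homotopy $g\simeq h$ in the same manner. Your cocycle verification is precisely the check needed at each inductive step, so the substance of your sketch survives---only the packaging via $\mathrm{Cone}(f)$ does not go through.
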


\vspace{0.2in}

\section{Proof of Theorem 1}

Given a chain complex $(A,d)$ from $K(\mathcal{A}),$ its \emph{homological resolution}
is a homological projective multicomlex $(C^{\ast,\ast},d^{\ast})$ with a multicomlex
map $\phi:C\rightarrow A$ inducing
 a quasi-isomorphism
$Tot(\phi):(Tot(C),d^{\ast})\rightarrow  (Tot(A),d) =(A,d).$ In particular,
$(C^{\ast,j},d^1)$ forms a projective resolution of the object $H^j(A)$ for all $j\in
\mathbb{Z}$ (see Fig. 1). \vspace{0.2in}

\unitlength=1.00mm \special{em:linewidth 0.4pt} \linethickness{0.4pt}
\begin{picture}(87.33,38.00)
\put(30.00,5.00){\vector(1,0){10.00}} \put(40.00,5.00){\vector(1,0){10.00}}
\put(50.00,5.00){\vector(1,0){10.00}} \put(60.00,5.00){\vector(1,0){10.33}}
\put(70.33,5.00){\vector(1,0){9.67}} \put(30.00,15.33){\vector(1,0){10.00}}
\put(40.00,15.33){\vector(1,0){10.00}} \put(50.00,15.33){\vector(1,0){10.00}}

\put(60.00,15.33){\vector(1,0){10.00}}

\put(70.33,15.33){\vector(1,0){9.55}} \put(30.00,25.00){\vector(1,0){10.00}}
\put(40.00,25.00){\vector(1,0){10.00}}

\put(50.00,25.00){\vector(1,0){10.00}} \put(60.00,25.00){\vector(1,0){10.33}}

\put(70.33,25.00){\vector(1,0){9.67}}

 \put(30.00,35.33){\vector(1,0){10.00}}
\put(40.00,35.33){\vector(1,0){10.00}} \put(50.00,35.33){\vector(1,0){10.00}}
\put(60.00,35.33){\vector(1,0){10.33}} \put(70.33,35.33){\vector(1,0){9.67}}

\put(40.00,35.53){\vector(3,-2){30.23}} \put(40.00,35.33){\vector(2,-1){20.20}}
\put(50.00,35.33){\vector(2,-1){20.37}} \put(50.00,15.33){\vector(2,-1){20.37}}

\put(75.00,7.00){\makebox(0,0)[cc]{$\rho$}}
\put(75.00,17.00){\makebox(0,0)[cc]{$\rho$}}
\put(75.33,27.00){\makebox(0,0)[cc]{$\rho$}}
\put(75.33,37.33){\makebox(0,0)[cc]{$\rho$}}

\put(64.00,30.70){\makebox(0,0)[cc]{$_{d^2}$}}
\put(53.80,30.70){\makebox(0,0)[cc]{$_{d^2}$}}
\put(66.00,20.50){\makebox(0,0)[cc]{$_{d^3}$}}
\put(66.00,9.75){\makebox(0,0)[cc]{$_{d^2}$}}

\put(65.00,37.00){\makebox(0,0)[cc]{$_{d^1}$}}
 \put(55.00,37.00){\makebox(0,0)[cc]{$_{d^1}$}}
\put(45.00,37.00){\makebox(0,0)[cc]{$_{d^1}$}}
\put(64.60,26.20){\makebox(0,0)[cc]{$_{d^1}$}}
\put(64.00,16.67){\makebox(0,0)[cc]{$_{d^1}$}}
\put(64.33,6.33){\makebox(0,0)[cc]{$_{d^1}$}}

\put(55.00,16.67){\makebox(0,0)[cc]{$_{d^1}$}}
\put(44.67,16.67){\makebox(0,0)[cc]{$_{d^1}$}}
\put(54.67,6.33){\makebox(0,0)[cc]{$_{d^1}$}}
\put(44.67,6.33){\makebox(0,0)[cc]{$_{d^1}$}}

\put(80.33,35.33){\circle*{0.67}} \put(80.00,25.00){\circle*{0.67}}
\put(80.00,15.33){\circle*{0.67}} \put(80.00,5.00){\circle*{0.67}}
\put(70.33,5.00){\circle*{1.33}} \put(70.33,15.33){\circle*{1.33}}
\put(70.33,25.00){\circle*{1.33}} \put(70.33,35.33){\circle*{1.33}}
\put(60.00,35.33){\circle*{1.33}} \put(50.00,35.33){\circle*{1.33}}
\put(39.67,35.33){\circle*{1.33}} \put(60.00,25.00){\circle*{1.33}}
\put(50.00,25.00){\circle*{1.33}} \put(40.00,25.00){\circle*{1.33}}
\put(40.00,15.33){\circle*{1.33}} \put(50.00,15.33){\circle*{1.33}}
\put(60.00,15.33){\circle*{1.33}} \put(60.00,5.00){\circle*{1.33}}
\put(50.00,5.00){\circle*{1.33}} \put(40.00,5.00){\circle*{1.33}}

\put(88.83,5.00){\makebox(0,0)[cc]{$H^{j-1}(A)$}}
\put(87.33,15.33){\makebox(0,0)[cc]{$H^j(A)$}}
\put(88.83,25.00){\makebox(0,0)[cc]{$H^{j+1}(A)$}}
\put(88.83,35.33){\makebox(0,0)[cc]{$H^{j+2}(A)$}}
\end{picture}
\vspace{-0.2in}
$$
\hspace{-0.1in} _{\cdots\rightarrow C^{-3,\ast}\overset{d^1}\rightarrow C^{-2,\ast}
\overset{d^1}\rightarrow  C^{-1,\ast}\overset{d^1}\rightarrow\, C^{0,\ast}
\overset{\rho} \rightarrow \,\,H^{\ast}(A)}$$

\vspace{0.1in} \begin{center}{Figure 1. A fragment of a homological resolution.}
\end{center}
\vspace{0.2in}
\begin{proposition}\label{resolution}
If an abelian category $\mathcal{A}$ has enough projectives and countable coproducts,
then any chain complex $A$ of $K(\mathcal{A})$ has a homological resolution
$\phi:C\rightarrow A.$

\end{proposition}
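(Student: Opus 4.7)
The plan is to construct the homological resolution $(C,\phi)$ by an induction on the column index $i$, starting from $i=0$ and proceeding downward. Using that $\mathcal{A}$ has enough projectives, I would first choose, for each $j\in\mathbb{Z}$, a projective resolution $\cdots\to C^{-2,j}\xrightarrow{d^1} C^{-1,j}\xrightarrow{d^1} C^{0,j}\xrightarrow{\rho_j} H^j(A)\to 0$; this fixes the underlying bigraded object $C^{\ast,\ast}$ together with $d^1$, with $C^{i,j}=0$ for $i>0$. What remains to be specified are the higher differentials $d^r$ (for $r\geq 2$) and the components $\phi^{-i}:C^{i,j}\to A^{i+j}$, which are the only nonzero pieces of the multicomplex map $\phi:C\to A$ on column $i$.

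For the base case $i=0$, I would obtain $\phi^0:C^{0,j}\to A^j$ by lifting $\rho_j$ through the surjection $Z^j(A)\twoheadrightarrow H^j(A)$, using projectivity of $C^{0,j}$; since $d^r c$ vanishes on $C^{0,j}$ for every $r\geq 1$, the chain-map equation holds trivially here. For the inductive step at column $i<0$, suppose that on $\bigoplus_{k>i}C^{k,\ast}$ the $d^r$'s and $\phi^{-k}$'s have been defined so that the multicomplex identities $\sum_{p+q=n}d^pd^q=0$ and the chain-map equation $d_A\phi=\sum_r\phi\circ d^r$ both hold. Extending to column $i$ reduces to two coupled families of lifting problems: each multicomplex identity at $C^{i,\ast}$, treated in decreasing order of $r$ (with $r\geq 2$ and $i+r\leq 0$), becomes a lifting problem for the new $d^r|_{C^{i,\ast}}$ through $d^1$; and the chain-map equation $d_A\phi^{-i}(c)=\sum_{r\geq 1}\phi^{-(i+r)}(d^r c)$ becomes a lifting problem for $\phi^{-i}|_{C^{i,\ast}}$ through $d_A$. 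In each case I would verify that the relevant obstruction is a $d^1$-boundary in the row $(C^{\ast,j},d^1)$ or a $d_A$-boundary in $A$; this follows from a diagram chase combining $d_A^2=0$, the inductive hypothesis, and the acyclicity of each row in negative horizontal degrees. Once the obstruction is shown to be a boundary, the desired lift exists by projectivity of $C^{i,j}$.

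Having built $C$ and $\phi$ column by column, the countable-coproduct hypothesis guarantees that $Tot(C)^n=\bigoplus_{i+j=n}C^{i,j}$ is a well-defined object and $Tot(C)$ a chain complex. To show $Tot(\phi):Tot(C)\to A$ is a quasi-isomorphism, I would invoke the spectral sequence of the column filtration $\{C_{(k)}\}$: by the homological conditions ($d^0=0$ together with the acyclicity of each row in negative degrees) it collapses at $E_1$ with $E_1^{0,\ast}=H^\ast(A)$, so that the associated graded of $Tot(C)$ has cohomology $H^\ast(A)$; the map $Tot(\phi)$ induces $\rho_j$ on the associated graded, which is an isomorphism. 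The hard part will be the inductive step, namely verifying at every column and for each of the two families of lifts that the prescribed obstruction is a boundary. This is a delicate bookkeeping argument that uses the multicomplex identities just established at column $i$ together with the inductive hypothesis at columns $i+1,\ldots,0$, and is where the homological nature of the multicomplex, in particular the acyclicity of each row in negative horizontal degrees, plays the essential role.
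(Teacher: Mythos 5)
Your overall skeleton is the paper's: fix row-wise projective resolutions $\rho:(C^{\ast,j},d^1)\rightarrow H^j(A)$ (which pins down $C$ and $d^1$), construct the higher differentials and $\phi$ by an inductive lifting argument using projectivity, and read off the quasi-isomorphism from the collapsing column-filtration spectral sequence (the paper, too, leaves that last point to the remark in Section 2). Organizing the induction by columns instead of by the diagonal index $r$ is a harmless reshuffling of the paper's double induction. The genuine gap is at exactly the step you defer to a ``diagram chase'': the obstruction $\omega=\sum_{r\geq 1}\phi^{-(i+r)}d^r|_{C^{i,\ast}}$ to constructing $\phi^{-i}$ is \emph{not} a $d_A$-boundary as a consequence of $d_A^2=0$, the inductive hypothesis, and negative-degree row acyclicity. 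Those ingredients only show $\omega$ is a $d_A$-cocycle; its class in $H^{\ast}(A)$ is in general nonzero. Worse, in your scheme the top component $d^{-i}|_{C^{i,\ast}}:C^{i,\ast}\rightarrow C^{0,\ast+i+1}$ is constrained by \emph{no} multicomplex identity at column $i$ (every composite $d^p\circ d^{-i}$ out of column $0$ lands in a zero column, and $d^{-i}$ precomposed with any $d^q$, $q\geq 1$, from column $i$ lands in a positive column), so ``lifting through $d^1$'' does not even define it. That undetermined component is precisely the free parameter that must be spent on $\phi$: the paper's key equation is $\rho^0 d^{n+1}=-\nu\,\phi^{(n)}d^{(n)}$, i.e., one lifts the cohomology class of the partial obstruction through the epimorphism $\rho^0:C^{0,\ast}\rightarrow H^{\ast}(A)$ using projectivity of $C^{-n-1,\ast}$, and only after this choice does $\nu\,\phi^{(n)}d^{(n+1)}=0$, making the obstruction a boundary so that $\phi^{n+1}$ exists. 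Your phrase ``two coupled families of lifting problems'' gestures at this, but the mechanism you actually name cannot deliver the vanishing; with an arbitrary (e.g., zero) choice of the top components the construction of $\phi$ breaks down at the first nontrivial column.

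Two subsidiary corrections. First, within a column the $d^r|_{C^{i,\ast}}$ must be built in \emph{increasing} order of $r$, not decreasing: the identity to be solved reads $d^1 d^r|_{C^{i,\ast}}=-\sum_{2\leq q\leq r-1}d^{\,r+1-q}d^q|_{C^{i,\ast}}-d^{\,r}d^1|_{C^{i,\ast}}$, whose right-hand side involves the lower unknowns $d^q|_{C^{i,\ast}}$, $q<r$, so decreasing order leaves the lifting problems ill-posed. Second, for $r=-i-1$ this right-hand side lands in horizontal degree $0$, where the rows are \emph{not} acyclic (their $d^1$-cohomology there is $H^j(A)$); its liftability again uses the relation $\rho\,d^{(\cdot)}=-\nu\,\phi\,d^{(\cdot)}$ satisfied by the previously chosen top components together with the chain-map equations already established --- the same coupling as above. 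So your blanket appeal to ``acyclicity of each row in negative horizontal degrees'' silently skips the degree-$0$ cases, which is where all the actual content of the proposition sits.
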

\begin{proof}
First choose a projective resolution $\rho:(C^{\ast,j},d^1)\rightarrow H^j(A)$
  of  $H^j(A)$ for each $j\in \mathbb{Z}$ so that
$d^1:C^{i-1,\ast}\rightarrow
  C^{i,\ast}$ for $i\leq 0.$
Consider the epimorphism
\[\rho^0=\rho|_{C^{0,\ast}}:C^{0,\ast}\rightarrow H^{\ast}(A).
\]
Factor it through cocycles $C^{0,\ast}\overset{\phi'}\rightarrow
ZA^{\ast}\overset{\nu}\rightarrow  H^{\ast}(A)$ and obtain a morphism $\phi^0:
C^{0,\ast}\overset{\phi'}\rightarrow ZA^{\ast}\hookrightarrow A^{\ast}.$ Define also a
morphism $$ \phi^1: C^{-1,\ast}\rightarrow A^{\ast-1}
$$ by $d\phi^1=\phi^0d^1.$

Assume by induction that we have constructed morphisms
$$
d^r:C^{\ast,\ast}\rightarrow C^{\ast+r,\ast-r+1} \ \  \text{and} \ \
\phi^{r}:C^{-r,\ast}\rightarrow A^{\ast-r}
$$
for $0\leq r\leq n$ (with $d^0=0$) such that
$$
\sum_{k+\ell\leq n+1}d^kd^{\ell}=0 \ \ \text{and} \ \ d\phi^{(n)}=\phi^{(n-1)}d^{(n)}\
\  \text{on} \  \  C_{(-n)}
$$
where $d^{(n)}=\sum_{1\leq r\leq n} d^r$ and $\phi^{(n)}=\sum_{0\leq r\leq n} \phi^r:
C_{(-n)}\rightarrow A .$

Consider the composition $\phi^{(n)}d^{(n)}:C^{-n-1,\ast}\rightarrow A^{\ast-n}.$
Clearly,
$$\phi^{(n)}d^{(n)}: C^{-n-1,\ast}\rightarrow
ZA^{\ast-n} \, (\hookrightarrow A^{\ast-n}).$$ Form the composition $\nu
\phi^{(n)}d^{(n)}: C^{-n-1,\ast}\rightarrow H^{\ast-n}(A)$ to obtain a morphism
$d^{n+1}:C^{-n-1,\ast}\rightarrow C^{0,\ast-n}$ such that $\rho^0d^{n+1}=-\nu
\phi^{(n)}d^{(n)}.$ Since $H^i(C^{i,\ast},d^1)=0$ for $i<0,$ we can extend $d^{n+1}$ on
$C^{\ast,\ast}$ with $\sum_{k+\ell\leq n+2}d^kd^{\ell}=0. $ Then $\nu
\phi^{(n)}d^{(n+1)}=0,$ and there is a morphism $\phi^{n+1}:C^{-n-1,\ast}\rightarrow
A^{\ast-n-1}$ with $d\phi^{(n+1)}=\phi^{(n)}d^{(n+1)}.$

Define $$  d^{\ast}=\sum_{r\geq 1}d^r   \ \ \text{and} \ \ \phi=\sum_{r\geq 0}\phi^r
$$ to obtain the homological resolution $\phi:(C,d^{\ast})\rightarrow
(A,d).$

\end{proof}

In particular, one can take $d^r=0,\,r\geq 2,$ on  $C^{\ast,\ast}$ and $\phi=\rho$ when
$d_{A}=0.$

Note that in the abelian category of modules homological multicomplex resolutions were
in fact constructed in \cite{berika1}, \cite{berika2}(compare \cite{Heller}).

\subsection{Proof of Proposition \ref{Whitehead}} As above the proof uses
the induction on the resolution degree of the homological projective multicomplex $C.$
We assume that $A$ and $B$ are chain complexes; the case of homological multicomplexes
is similar. First show that $f_{_{\#}}$ is an epimorphism. Let $\bar{g}:C\rightarrow
B.$ Consider the restriction $\bar{g}^0=\bar{g}|_{C^{0,\ast}}: C^{0,\ast}\rightarrow
B^{\ast}.$ Since $\bar{g}$ is chain, $\bar{g}^0$ factors through cocycles
$\bar{g}^0:C^{0,\ast}\rightarrow ZB^{\ast}(\hookrightarrow B^{\ast}).$ Since $H(f)$ is
an isomorphism, we can define ${g}^0:C^{0,\ast}\rightarrow ZA^{\ast}\hookrightarrow
A^{\ast}$ such that $\nu fg^0=\nu\bar{g}^0:C^{0,\ast}\rightarrow
ZB^{\ast}\overset{\nu}\rightarrow  H^{\ast}(B).$ Obviously, there is
$s^0:C^{0,\ast}\rightarrow B^{\ast-1}$ with $fg^0-\bar{g}^0=ds^0$ and then put
$\bar{\bar{g}}=\bar{g}+ds^0+s^0d$ to obtain the commutative diagram
$$
\begin{array}{ccccc}
  C_{(0)}(=C^{0,\ast}) & \overset{g^{0}}\longrightarrow & A \\
\downarrow  & &    \ \ \downarrow f  \\
C & \overset{\bar{\bar{g}}}\longrightarrow & B.
\end{array}
$$
Assume by induction that we have constructed morphisms
$$ g^i:C^{-i,\ast}\rightarrow A^{\ast-i},0\leq i\leq n,\ \  \text{and} \ \
\tilde{g}:C\rightarrow B  $$ such that   $\tilde{g}\simeq \bar{g}$ and the following
diagram
$$
\begin{array}{ccccc}
  C_{(n)} & \overset{g^{(n)}}\longrightarrow & A \\
\downarrow  & &    \ \ \downarrow f  \\
C & \overset{{\tilde{g}}}\longrightarrow & B
\end{array}
$$
commutes. Since the above diagram is commutative and  $H(f)$ is an isomorphism we can
choose $g^{n+1}:C^{-n-1,\ast}\rightarrow A^{\ast-n-1}$ together with
$s^{n+1}:C^{-n-1,\ast}\rightarrow B^{\ast-n-2}$ such that  $dg^{n+1}=g^{(n)}d^{(n+1)}$
and $fg^{n+1}-\tilde{g}^{n+1}=ds^{n+1}.$ Put
$\tilde{\tilde{g}}=\tilde{g}+ds^{n+1}+s^{n+1}d$ to obtain the commutative diagram
$$
\begin{array}{ccccc}
  C_{(n+1)} & \overset{g^{(n+1)}}\longrightarrow & A \\
\downarrow  & &    \ \ \downarrow f  \\
C & \overset{\tilde{\tilde{g}}}\longrightarrow & B.
\end{array}
$$
Thus,  $g=\sum_{i\geq 0}g^i:C\rightarrow A$ is a chain map with $fg\simeq \bar{g},$
i.e. $f_{_{\#}}[g]=[\bar{g}].$

Now let $g,h:C\rightarrow A$ be two morphisms such that $fg$ and $fh$  are connected by
a chain homotopy $s:C\rightarrow B,$ i.e. $fg\underset{s}\simeq fh.$ Clearly,
$fg^0-fh^0=ds^0$ for $s^0=s|_{C^{0,\ast}},$ and, since  $H(f)$ is an isomorphism there
is $t^0:C^{0,\ast}\rightarrow A^{\ast-1}$ with $g^0-h^0=dt^0.$ Choose $t^0$ with
$ft^{0}-{s}^{0}=d\beta^0$ for some $\beta^0:C^{0,\ast}\rightarrow B^{\ast-2}.$ Put
${h}^{\prime}=h+dt^0+t^0d$  and $s^{\prime}=\{{s^{\prime}}^k\}_{k\geq 0},$
$${s^{\prime}}^k= \left\{
\begin{array}{lll}

 0, & k=0\\

s^1+\beta^0d, & k=1\\

s^k, &  k>1.
\end{array}
\right.
$$
Then ${h^{\prime}}^{0}=g^{0}$ and $fg\underset{{s}^{\prime}}\simeq fh^{\prime}.$

Assume by induction that we have constructed a morphism $\bar{h}:C\rightarrow A$
together with chain homotopy $ \bar{s} :C\rightarrow B$ such that
$\bar{h}^{(n-1)}=g^{(n-1)},\, \bar{h}\simeq h$ and $fg\underset{\bar{s}}\simeq
f\bar{h}$
 with $\bar{s}^{(n-1)}=0.$
Since $H(f)$ is an isomorphism there is $t^{n}:C^{-n,\ast}\rightarrow A^{\ast-n-1}$
such that $g^{n}-\bar{h}^n=dt^{n}.$ We can choose $t^{n}$  with
$ft^{n}-\bar{s}^{n}=d\beta^{n}$ for some $\beta^{n}:C^{-n,\ast}\rightarrow
B^{\ast-n-2}.$ Put $\bar{\bar{h}}=\bar{h}+dt^{n}+t^{n}d$   and
$\bar{\bar{s}}=\{\bar{\bar{s}}^k\}_{k\geq 0},$
$$\bar{\bar{s}}^k= \left\{
\begin{array}{lll}

 0, & 0\leq k\leq n\\

\bar{s}^{n+1}+\beta^nd, & k=n+1\\

\bar{s}^k, &  k>n+1.
\end{array}
\right.
$$
Then $\bar{\bar{h}}^{(n+1)}=g^{(n+1)}$ and $fg\underset{\bar{\bar{s}}}\simeq
f\bar{\bar{h}}.$ The induction step is completed.

Finally, we get that $g\simeq h$ as required.

\subsection{Proof of Theorem 1} Given a chain complex $A,$ apply Proposition
\ref{resolution} to obtain a resolution $\phi: C\rightarrow A.$ Given a chain map
$f:A\rightarrow A',$ consider a diagram
$$
\begin{array}{ccccc}
                                           \hspace{0.65in}         C' \\
                \hspace{0.8in}  \downarrow  {\phi'}  \\
          C\overset{\phi}\rightarrow A \overset{f} \rightarrow  A'
\end{array}
$$
 and apply Proposition
\ref{Whitehead}  for the quasi-isomorphism $\phi'$  to obtain a multicomplex map
$g:C\rightarrow C'$ such that $[\phi'g]=[f\phi]$ in $K_{\mathcal{M}}(\mathcal{A}).$
Thus, we get
 the functor
$$\varrho: K(\mathcal{A})\rightarrow
K_{\mathcal{P}}(\mathcal{A})$$ which to each chain complex assigns its homological
resolution. Again by the above propositions we deduce that $\varrho$ transforms
quasi-isomorphisms into isomorphisms, so that using the universal property of the
localization functor $Q$  we get the functor
$$\bar{\varrho}: D(\mathcal{A})\rightarrow
K_{\mathcal{P}}(\mathcal{A})$$ such that the diagram
$$
\begin{array}{ccccc}
  K(\mathcal{A})  \overset{Q}\longrightarrow  D( \mathcal{A}) \\
      \varrho\searrow  \ \ \       \ \ \downarrow \bar{\varrho}  \\
          \ \ \ \  K_{\mathcal{P}}(\mathcal{A})
\end{array}
$$
commutes.

 Now it is straightforward to check
that $\bar{\varrho}$ is an inverse for $\iota.$
\subsection{Derived functors} Let $F:\mathcal{A}\rightarrow
\mathcal{B}$ be an additive covariant functor   between abelian categories with enough
projectives and countable coproducts. Obviously, we have the induced functor
$\mathcal{F}:K_{\mathcal{P}}(\mathcal{A})\rightarrow K_{\mathcal{M}}(\mathcal{B}).$ It
is easy to verify that the composition
\[ D(\mathcal{A})\overset{\bar{\varrho}}\longrightarrow
K_{\mathcal{P}}(\mathcal{A}) \overset{\mathcal{F}}\longrightarrow
K_{\mathcal{M}}(\mathcal{B})\overset{\kappa}\longrightarrow
K(\mathcal{B})\overset{Q}\longrightarrow D(\mathcal{B})
\]
is the \emph{left derived functor} in the sense of Verdier
\[LF: D(\mathcal{A})\rightarrow D(\mathcal{B}).\]

\subsection{The minimality of homological resolutions}
Finally, some remarks about the minimality of homological resolutions. For example,
given a chain  complex $(H^{\ast},d)$ on the category of modules over  a principal
ideal domain a homological resolution $(C^{\ast,\ast},d^{\ast})\rightarrow
(H^{\ast},d)$ of $(H^{\ast},d)$ can be chosen to be concentrated in the resolution
degrees $0$ and $-1$ with $d^r=0$ unless $r=1.$ On the other hand, additional
structures on $C^{i,j}$ mentioned in the introduction may impose $i<-1$ (cf.
\cite{sane2}): Namely, if $H^{\ast}=\mathbb{Z}[x_1,...,x_n],n>1,$ is a polynomial
algebra with $d=0$, then the requirement that $C^{\ast,\ast}$ is endowed with a
non-commutative multiplication \emph{compatible} with the bigrading imposes the
multiplicative generators of the minimal resolution $C^{i,j}$ to be concentrated in
resolution degrees $i\geq -n+1,$ while the resolution lengthes    of groups
$C^{\ast,j}$ for $j\geq 0$ are unbounded. If one introduces a non-commutative operation
$\smile_1$ on $C^{\ast,\ast}$ that measures the non-commutativity of the above
multiplication and is compatible with the bigrading, then even the multiplicative
generators can not be no longer chosen to be bounded by the resolution degree and so
on.

\vspace{0.2in}

\end{document}